\newcommand{\cc}{\mathbb{C}}
\newcommand{\pp}{\mathbb{P}}
\newcommand{\I}{\mathcal{I}}
\renewcommand{\O}{\mathcal{O}}
\newcommand{\Sym}{\operatorname{Sym}}
\newcommand{\Pic}{\operatorname{Pic}}
\newcommand{\Hilb}{\operatorname{Hilb}}
\newcommand{\Bl}{\operatorname{Bl}}
\newcommand{\Hom}{\operatorname{Hom}}
\newcommand{\Ram}{\operatorname{Ram}}
\newcommand{\rk}{\operatorname{rk}}
\newcommand{\ev}{\operatorname{ev}}
\renewcommand{\bar}{\overline}
\newtheorem*{thm*}{Theorem}
\newtheorem{thm}{Theorem}[section]
\newtheorem{lem}[thm]{Lemma}
\newtheorem{prop}[thm]{Proposition}
\newtheorem*{prop*}{Proposition}
\newtheorem*{question}{Main Question}
\theoremstyle{definition}
\newtheorem{defin}[thm]{Definition}
\newtheorem*{quest*}{Question}
\theoremstyle{remark}
\newtheorem{rem}[thm]{Remark}
\newcommand{\defi}[1]{\textsf{#1}}
\begin{document}

\title[Interpolation for space curves]{Interpolation for Brill-Noether space curves\footnote{ L\MakeLowercase{ast} U\MakeLowercase{pdated:} N\MakeLowercase{ovember 26, 2016}}}
\author{Isabel Vogt}
\address{Department of Mathematics, Massachusetts Institute of Technology, Cambridge, MA 02139}
\email{ivogt@mit.edu}

\maketitle

\begin{abstract}
In this note we compute the number of general points through which a general Brill-Noether space curve passes.
\end{abstract}

\section{Introduction}

The goal of this note is to answer the following  fundamental incidence question for space curves:

\begin{question}
Fix a general collection of $n$ points in $\pp^3$ over $\cc$.  When does there exist a curve $C \hookrightarrow \pp^3$ of degree $d$ and genus $g$ and general moduli passing through those points?
\end{question}

In order for there to be a nondegenerate curve $C \hookrightarrow \pp^3$ of degree $d$ and genus $g$ and general moduli, the Brill-Noether number $\rho(d,g,3) = 4d-3g-12$ must be nonnegative.  In that case, there is a unique irreducible component of the Kontsevich space $M_g(\pp^3, d)$ which dominates $M_g$, and whose general member is a nondegenerate immersion of a smooth curve.  We will call this component $M_g(\pp^3, d)^\circ$, and curves in this component \defi{Brill-Noether} curves.

Let $M_{g,n}(\pp^3, d)^\circ$ be the component of $M_{g, n}(\pp^3, d)$ dominating $M_g(\pp^3, d)^\circ$.  Then there is a map 
\[M_{g, n}(\pp^3, d)^\circ \xrightarrow{\ev} \left(\pp^3\right)^n, \]
taking the image of the $n$ marked points.  We are asking in the main question whether this map is dominant.  An obvious prerequisite is that 
\[\dim M_{g,n}(\pp^3, d)^\circ = 4d + n  \geq \dim \left(\pp^3\right)^n = 3n.\]
Hence we would expect to be able to pass a degree $d$ Brill-Noether curve through up to $2d$ general points in $\pp^3$.  The main result of this paper is that this expectation is true with exactly two exceptions:

\begin{thm}\label{main}
There exists a Brill-Noether curve $C$ of degree $d$ and genus $g$ in $\pp^3$ passing through a maximum of
\[\begin{cases} 9 \text{ general points,} & \ \text{ if } (d,g) =(5,2), \text{ or } (6,4) \\
2d \text{ general points,} & \ otherwise. \end{cases}\]
%$2d$ general points if and only if
%\[(d,g) \not\in \{ (5,2), (6,4) \}. \]
\end{thm}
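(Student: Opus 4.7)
The upper bound in the statement is essentially immediate; the main content is constructing Brill-Noether curves through the required number of general points. The plan is to prove existence by induction on $(d, g)$, using specialization to reducible nodal curves together with a smoothing result.

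\textbf{Upper bound.} In the generic case, $n \leq 2d$ follows immediately from the dimension count $\dim M_{g, n}(\pp^3, d)^\circ = 4d + n \geq 3n$ noted in the introduction. For the exceptional pairs $(d,g) = (5, 2)$ and $(6, 4)$, I would show that every Brill-Noether curve with these invariants lies on a quadric surface in $\pp^3$: since $\O_C(2)$ has degree $2d > 2g - 2$ in both cases, Riemann-Roch gives $h^0(C, \O_C(2)) = 2d - g + 1 = 9$, while $h^0(\pp^3, \O(2)) = 10$, so the restriction map has a nonzero kernel. Since $10$ general points of $\pp^3$ lie on no common quadric surface, such a curve cannot pass through more than $9$ general points.

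\textbf{Existence via degeneration.} Existence is proved by induction on $(d, g)$ with $\rho(d, g, 3) \geq 0$. The inductive step specializes a smooth Brill-Noether curve to a reducible nodal union $C' = C_1 \cup C_2$, where $C_i$ lies in $M_{g_i}(\pp^3, d_i)^\circ$ (possibly a line or rational normal curve), meeting transversally at $k$ nodes, with $d_1 + d_2 = d$ and $g_1 + g_2 + k - 1 = g$. A standard smoothing result for nodal curves in $\pp^3$ --- deforming $C'$ in a family while preserving incidence with prescribed general points --- then produces a smooth Brill-Noether curve in $M_g(\pp^3, d)^\circ$ through all the general points that $C'$ passes through. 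The base cases are rational curves ($g = 0$), for which interpolation through $2d$ general points is classical, together with a handful of low $(d,g)$ pairs verified by ad hoc constructions on quadric or cubic surfaces. The exceptional pairs are verified directly at $9$ points: the $9$ general points of $\pp^3$ determine a unique quadric, on which one exhibits a smooth irreducible curve of type $(2, 3)$ or $(3, 3)$ through the corresponding $9$ points of the quadric (Brill-Noether genericity is an open condition and thus survives).

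\textbf{Main obstacle.} The delicate step will be making the induction close up to hit $n = 2d$ exactly: naively, each component $C_i$ contributes $2d_i$ general points, but the $k$ nodal incidences consume dimension from the joint family, giving a shortfall of roughly $k$ points. Closing this gap likely requires strengthening the inductive hypothesis to include auxiliary interpolation conditions (for instance, incidence with a fixed line or point-with-tangent data), so that the $k$ nodal conditions can be absorbed by these extra constraints. One must also verify that the smoothing actually lands in the Brill-Noether component $M_g(\pp^3, d)^\circ$ of the Kontsevich space rather than in a different component --- this is a deformation-theoretic check on the normal bundle of $C'$ --- and confirm that the two exceptional pairs obstruct only themselves, never propagating into the inductive step for a neighboring $(d, g)$.
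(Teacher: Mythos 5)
There is a genuine gap here, and it is exactly the one you flag yourself under ``Main obstacle.'' The degeneration-and-smoothing induction you outline is the strategy of Atanasov's earlier work, and it is known to succeed only in the nonspecial range $d \geq g+3$; in the special range the bookkeeping problem you identify --- each of the $k$ nodes eats roughly one point of interpolation, so the union $C_1 \cup C_2$ falls about $k$ points short of $2d$ --- is precisely where that approach stalls. Saying that closing the gap ``likely requires strengthening the inductive hypothesis'' is not a proof: one must actually specify the strengthened statement (incidence with lines, tangency data, etc.), check that it is preserved under the degeneration, and verify that the smoothing stays in the Brill-Noether component. None of that is done, and it is the entire difficulty of the theorem. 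Your upper-bound arguments and the treatment of the two exceptional pairs $(5,2)$ and $(6,4)$ are fine and agree with the paper's remarks.

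The paper takes a completely different route that sidesteps the induction. It first reduces the theorem to a cohomological statement: $N_{C/\pp^3}$ satisfies interpolation provided $h^1(N_C(-\Gamma))=0$ for a single effective divisor $\Gamma$ of degree $2d$ (Lemma \ref{genpts}). Taking $\Gamma$ to be a quadric section, Larson's theorem ($h^1(N_C(-2))=0$ outside six explicit pairs $(d,g)$) settles all but six cases at once; two of those are covered by the nonspecial-range results, two are the genuine exceptions, and only $(7,5)$ and $(8,6)$ remain. These are then handled by explicit geometry --- realizing $C$ as a projection of a canonical curve and analyzing the normal bundle exact sequence directly (via the basepoint-free pencil trick for $(7,5)$, and via a degree-$21$ sub-line-bundle coming from a singular cubic surface for $(8,6)$). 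To repair your argument along your own lines you would essentially have to reprove the special-range interpolation from scratch; as written, the existence half of your proof does not go through.
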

The fact that curves of $(d,g) = (5,2)$ (resp. $(6,4)$) cannot pass through $10$ (resp. $12$) general points is clear: the curve is forced to lie on a quadric surface in $\pp^3$, which itself can only be passed through $9$ general points.  However a $(2,3)$- (resp. $(3,3)$-) curve on $\pp^1 \times \pp^1$ can be passed through $9$ general points on the quadric since it moves in an $11$ (resp. 15) dimensional family.  These two exceptional cases were already noticed by Atanasov \cite{nasko} and Stevens \cite{stevens} respectively.

Theorem \ref{main} builds upon the work of Perrin \cite{perrin} who investigated the problem via liaison, and Atanasov \cite{nasko}, who answered the question in the nonspecial range ($d \geq g + 3$).  The generalization to Brill-Noether curves in $\pp^r$ (with appropriate choice of $n$) was answered in the nonspecial range in \cite{joint}.  Interpolation problems for higher dimensional varieties were studied in \cite{landesman}, \cite{lp}, and \cite{coskun}.

As in previous work, the current approach is via deformation theory.  First we make the following definition:

\begin{defin} 
We say that a vector bundle $E$ on a smooth curve satisfies the property of \defi{interpolation} if it is nonspecial (e.g., $h^1(E)= 0$) and for every $n\geq 0$, a general effective divisor $D$ of degree $n$ satisfies either
\[ h^0(E(-D)) = 0 \qquad \text{or} \qquad h^1(E(-D)) = 0. \]
\end{defin}

Using this terminology we show the following:

\begin{prop}\label{mainprop}
Let $C$ be a general Brill-Noether curve of degree $d$ and genus $g$ in $\pp^3$.  Then the normal bundle $N_{C/\pp^3}$ satisfies the property of interpolation if and only if $(d,g)$ is not $(5,2)$ or $(6,4)$.
\end{prop}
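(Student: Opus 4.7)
The plan is to prove Proposition~\ref{mainprop} by degeneration and induction on the pair $(d,g)$, combining the nonspecial-range results of Atanasov~\cite{nasko} with an inductive argument for the special range $d \leq g+2$.

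I would first handle the two exceptional cases. A general Brill-Noether curve of type $(5,2)$ or $(6,4)$ is forced to lie on a smooth quadric $Q \subset \pp^3$ as a curve of bidegree $(2,3)$ or $(3,3)$ respectively. Using the normal bundle sequence
\[
0 \to N_{C/Q} \to N_{C/\pp^3} \to N_{Q/\pp^3}\big|_C = \O_C(2) \to 0,
\]
combined with the explicit description of $N_{C/Q}$ as an invertible sheaf on $C$ via the adjunction on $Q$, interpolation is reduced to computable line bundle cohomology on $C$. One then exhibits, at the critical twist, a general effective divisor $D$ for which both $h^0(N_{C/\pp^3}(-D))$ and $h^1(N_{C/\pp^3}(-D))$ are positive, contradicting interpolation.

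For the remaining non-exceptional $(d,g)$ with $\rho \geq 0$, Atanasov's theorem settles $d \geq g+3$, and the special range $d \leq g+2$ would be treated by induction. I would specialize a general Brill-Noether curve $C$ to a reducible nodal curve $C_0 = C' \cup L$, where $L \subset \pp^3$ is a line meeting a smooth Brill-Noether curve $C'$ of type $(d-1,\, g-k+1)$ transversely at $k$ general points; the integer $k$ is chosen so that $C_0$ smooths to a Brill-Noether curve of type $(d,g)$ and so that $(d-1, g-k+1)$ is strictly smaller in the induction order (or lies in Atanasov's range). The limit normal bundle on $C_0$ fits in the standard exact sequence
\[
0 \to N_{C'/\pp^3}(p_1 + \cdots + p_k) \to N_{C_0}\big|_{C'} \to T^1 \to 0,
\]
and analogously on $L$, with $N_{L/\pp^3} = \O_{\pp^1}(1)^{\oplus 2}$ trivially interpolating. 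A suitably strengthened inductive hypothesis on $N_{C'}$, one allowing prescribed twists at the $k$ nodes, then propagates interpolation to $N_{C_0}$, and semi-continuity transfers this to $N_C$.

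The principal obstacle is formulating the precise strengthening of interpolation so that it is simultaneously preserved under the gluing construction and verifiable at the base cases. The base cases themselves may require separate, hands-on analysis, especially for low values of $\rho$ in which Brill-Noether curves are forced onto quadric or cubic surfaces and the inductive specialization is constrained. Moreover, the choice of specialization, and in particular the intersection number $k$, must be calibrated so that the target $(d-1,\, g-k+1)$ either falls in Atanasov's nonspecial range or constitutes a strictly smaller instance of the special-range problem; for certain $(d,g)$ this may require replacing the specialization line $L$ with a conic or rational normal curve.
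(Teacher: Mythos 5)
Your treatment of the two exceptional cases is sound: on the quadric, $N_{C/Q}(-D)$ has positive Euler characteristic while $N_{Q/\pp^3}|_C(-D) = \O_C(2)(-D)$ has negative Euler characteristic at the critical twist, so $h^0(N_C(-D))$ and $h^1(N_C(-D))$ are both forced to be positive; this matches the paper's (briefer) reasoning via counting points on a quadric.

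The positive direction, however, has a genuine gap, and it is the one you yourself flag. The paper runs no induction over the special range at all. Its key input is Larson's theorem that $h^1(N_C(-2)) = 0$ for all but six pairs $(d,g)$; since $\O_C(2)$ is an effective divisor of degree $2d$, Lemma~\ref{genpts} immediately gives interpolation outside $\{(4,1),(5,2),(6,2),(6,4),(7,5),(8,6)\}$. After discarding the two genuine exceptions and the two cases in Atanasov's nonspecial range, only $(7,5)$ and $(8,6)$ remain, and each is settled by an explicit construction: both curves are realized as projections of canonical curves, with the normal bundle controlled via a short exact sequence (reducing $(7,5)$ to generators of the ideal of $12$ points in $\pp^2$, and $(8,6)$ to exhibiting a degree-$21$ sub-line-bundle coming from a three-nodal cubic surface). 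Your route must instead handle the entire special range $d \leq g+2$ with $\rho \geq 0$, which is an infinite family, by degeneration to $C' \cup L$. The step where interpolation for $N_{C'}$ is upgraded to interpolation for the modification of $N_{C'}$ at $k$ points constrained to lie on a line, and then glued to the data on $L$, is precisely the hard technical content: plain interpolation is not preserved under twisting at non-general points, the needed strengthening (Atanasov's ``regular generation'' or stronger) is never formulated, and the base cases where the curve is forced onto a quadric or cubic surface are exactly where such degenerations break down. There is also no verification that $C' \cup L$ lies in the Brill--Noether component, nor that a line meeting $C'$ in $k$ suitably general points exists for the required $k$. As written the proposal is a program rather than a proof; the use of Larson's $h^1(N_C(-2))=0$ theorem is what collapses the problem to two concrete cases, and it is entirely absent from your argument.
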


Let us show that this implies the Theorem \ref{main}.  Denote $(C, p_1, ..., p_n) \in M_{g,n}(\pp^3, d)^\circ$ by the abbreviation $(C, D)$ where $D$ is the Cartier divisor $p_1 + ... + p_n$.  If $h^1(N_C(-D)) = 0$, then the map
\[M_{g, n}(\pp^3, d)^\circ \xrightarrow{\ev} \left(\pp^3\right)^n\] 
is smooth at the point $(C, D \subset C)$.  The Euler characteristic of $N_C$ is $4d$.  Thus if $D$ is a divisor of degree $n$, $\chi(N_C(-D)) = 4d-2n$.
%\[\chi(N_C) = 4d + 2g-2 + 2(1-g) = 4d. \]
So if $N_C$ satisfies interpolation, then for a general effective divisor $D$ of degree $n$, the following are equivalent:
\begin{enumerate}
\item $h^1(N_C(-D)) = 0$  
\item $ \chi(N_C(-D)) \geq 0 $
\item $ n \leq 2d$
\end{enumerate}
Thus if $n \leq 2d$, the map $\ev$ is smooth at the point $(C, D)$, and hence dominant.

The key input in the present work is the following theorem of Larson:

\begin{thm}[{\cite[Theorem 1.4]{larson}}]\label{quadric}
Let $C$ be a general Brill-Noether curve of degree $d$ and genus $g$ in $\pp^3$.  Then $h^1(N_C(-2)) = 0$ unless
\[(d,g) \in \{ (4,1), (5,2), (6,2), (6,4), (7,5), (8,6)\} .\]
\end{thm}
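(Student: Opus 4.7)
The numerical setup is immediate: since $\deg N_C = 4d+2g-2$, we have $\deg(N_C(-2)) = 2g-2$, so
$$\chi(N_C(-2)) = (2g-2) + 2(1-g) = 0,$$
and hence $h^1(N_C(-2)) = h^0(N_C(-2))$. The target statement is therefore equivalent to showing that a general Brill--Noether curve outside the exceptional list admits no nonzero global section of $N_C(-2)$. Equivalently (via the Euler sequence), this controls to what extent first-order deformations of $C$ inside any quadric surface through $C$ can be independent of the ambient deformations — and the exceptional list is exactly the range where $C$ is forced to lie on a quadric in a way that produces such sections.

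The plan is to argue by induction on $(d,g)$ via degeneration to a reducible nodal curve. Given $(d,g)$ outside the exceptional list, I would specialize $C$ to a curve $C_0 = X \cup_\Delta Y \subset \pp^3$, where $X$ and $Y$ are Brill--Noether curves of smaller invariants $(d_X, g_X)$, $(d_Y, g_Y)$ meeting transversally at a divisor $\Delta$ of length $\delta$, with $d_X + d_Y = d$ and $g_X + g_Y + \delta - 1 = g$. The decomposition is chosen so that (i) $C_0$ lies in the closure of $M_g(\pp^3,d)^\circ$, (ii) both $(d_X,g_X)$ and $(d_Y,g_Y)$ lie outside the exceptional list, and (iii) $\Delta$ is in general linear position inside $\pp^3$. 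On a nodal curve in smooth ambient, $N_{C_0}$ is locally free; restricting to the components yields $N_{C_0}|_X \cong N_X(\Delta)$ and similarly for $Y$, and the Mayer--Vietoris sequence
$$0 \to N_{C_0}(-2) \to N_{C_0}|_X(-2) \oplus N_{C_0}|_Y(-2) \to N_{C_0}|_\Delta(-2) \to 0$$
reduces the desired vanishing to the vanishing of $h^1$ of mild twists $N_X(\Delta - 2H)$ and $N_Y(\Delta - 2H)$ on the components, plus surjectivity of the evaluation map at $\Delta$. Those in turn follow from the inductive hypothesis on $X$, $Y$ together with the general-position choice of $\Delta$. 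Upper semicontinuity of $h^1$ then transports the vanishing from $C_0$ to the generic deformation $C$.

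The base cases are the $(d,g)$ near the start of the induction where no useful decomposition is available. For each such pair, I would exhibit an explicit Brill--Noether curve — typically on a smooth rational surface in $\pp^3$ (a cubic scroll, a smooth quadric, or a cubic surface) where the normal bundle fits into the adjunction sequence $0 \to N_{C/S} \to N_{C/\pp^3} \to N_{S/\pp^3}|_C \to 0$, and both outer terms are line bundles whose cohomology after twisting by $\O(-2)$ can be computed directly from the intersection theory of $S$. Finally, for the exceptional pairs $(4,1), (5,2), (6,2), (6,4), (7,5), (8,6)$ one verifies \emph{failure} of the vanishing by noting that a general Brill--Noether curve of these invariants is forced to lie on a quadric surface (a classical fact: e.g.\ $(4,1)$ is a complete intersection of two quadrics, and in the remaining cases the expected $h^0(I_C(2))$ is positive by Riemann--Roch on the ambient quadric), producing a nonzero section of $N_C(-2)$ via the normal bundle of the inclusion $C \subset Q \subset \pp^3$.

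The principal obstacle is the combinatorial bookkeeping in step (ii) above: for each $(d,g)$ outside the exceptional list, one must produce a decomposition $(d_X,g_X) + (d_Y,g_Y)$ with gluing genus $\delta-1$ such that neither summand falls back into the exceptional list and the inductive invariants strictly decrease. This is delicate precisely because the exceptional pairs cluster at small values, so decompositions in the low-degree regime have few available targets; multiple inductive patterns (e.g.\ attaching a line, attaching a conic, or attaching a rational normal cubic, each with a specific number of intersection points) are typically needed, and the base cases must be pushed high enough to escape the exceptional set for every decomposition pattern used downstream.
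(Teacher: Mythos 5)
This statement is not proved in the paper at all: it is Larson's theorem, imported verbatim as the ``key input'' from \cite{larson}, so there is no in-paper argument to compare against and your proposal must stand on its own. Its skeleton --- induction via degeneration to reducible nodal Brill--Noether curves, with explicit base cases on surfaces --- is indeed the style of argument used in Larson's work, but two steps are genuinely broken. First, the identification $N_{C_0}|_X \cong N_X(\Delta)$ is false for curves in $\pp^3$, where the normal bundle has rank $2$: at each node the restriction of $N_{C_0}$ to $X$ is the \emph{elementary modification} of $N_X$ in the direction of the branch of $Y$, so $N_X \subset N_{C_0}|_X \subset N_X(\Delta)$ and $\deg N_{C_0}|_X = \deg N_X + \delta$, not $\deg N_X + 2\delta$. (The full twist $N_X(\Delta)$ is correct only when the normal bundle is a line bundle, e.g.\ for plane curves.) Your own numerics detect the error: with your identification, $\chi(N_{C_0}|_X(-2)) = 2\delta_X$-type bookkeeping makes the Mayer--Vietoris sequence yield $\chi(N_{C_0}(-2)) = 2\delta \neq 0$, contradicting the $\chi = 0$ you correctly computed at the outset; with the correct modification each component contributes $\chi = \delta$ and the sequence balances. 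Relatedly, your requirement (i) --- that $X \cup_\Delta Y$ lie in the closure of the Brill--Noether component --- is not a condition one can simply impose; it is a nontrivial theorem (the subject of separate work of Larson on constructing reducible Brill--Noether curves) and would need to be proved for every decomposition pattern in your induction.

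Second, and more fundamentally, your verification of \emph{failure} in the six exceptional cases is wrong for half of them. Only $(4,1)$, $(5,2)$, $(6,4)$ general Brill--Noether curves lie on quadrics. For $(6,2)$ one has $h^0(\O_C(2)) = 12 - 2 + 1 = 11 > 10 = h^0(\O_{\pp^3}(2))$, and for $(8,6)$ likewise $h^0(\O_C(2)) = 11$; for $(7,5)$ one has $h^0(\O_C(2)) = 10$ and the restriction map is injective for a general such curve. So $h^0(\I_C(2)) = 0$ in these three cases and no section of $N_C(-2)$ arises from a quadric. The paper itself makes this point: it identifies $(5,2)$ and $(6,4)$ as the only quadric-forced interpolation exceptions, and it proves that $(6,2)$, $(7,5)$, $(8,6)$ \emph{do} satisfy interpolation through $2d$ general points even though $h^1(N_C(-2)) \neq 0$ --- the content being precisely that the divisor $\O_C(2)$ is special among degree-$2d$ divisors for these $(d,g)$. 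Consequently the nonvanishing of $h^1(N_C(-2))$ in these three cases requires a genuinely different argument, which your proposal does not supply; as written, that half of the theorem is simply unproved.
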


The relation to the present work is provided by the following lemma:

\begin{lem}\label{genpts}
If $h^1(N_C(-\Gamma)) = 0$ for some effective divisor $\Gamma$ of degree $2d$, then $N_C$ satisfies the property of interpolation. 
\end{lem}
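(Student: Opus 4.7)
The plan is to bootstrap from the single divisor $\Gamma$ of degree $2d$ supplied by the hypothesis to control the cohomology of $N_C(-D)$ at every degree $n$. The underlying mechanism is monotonicity under inclusion: whenever $D \leq D'$ are effective divisors on $C$, the exact sequence
\[0 \to N_C(-D') \to N_C(-D) \to N_C|_{D'-D}(-D) \to 0\]
has a right-hand term supported on finitely many points, so $H^1$ of that term vanishes. The long exact sequence then gives $h^0(N_C(-D')) \leq h^0(N_C(-D))$ and $h^1(N_C(-D)) \leq h^1(N_C(-D'))$.

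Given this, I would proceed as follows. The hypothesis $h^1(N_C(-\Gamma)) = 0$ combined with $\chi(N_C(-\Gamma)) = 4d - 2(2d) = 0$ forces $h^0(N_C(-\Gamma)) = 0$ as well. Now for each $n \geq 0$ I exhibit a specific effective divisor $D$ of degree $n$ satisfying the required vanishing. If $n \leq 2d$, let $D$ be any subdivisor of $\Gamma$ of degree $n$; then $h^1(N_C(-D)) \leq h^1(N_C(-\Gamma)) = 0$. If $n \geq 2d$, let $D$ be the sum of $\Gamma$ and any effective divisor of degree $n - 2d$; then $h^0(N_C(-D)) \leq h^0(N_C(-\Gamma)) = 0$. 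In particular, the case $n = 0$ yields the nonspeciality condition $h^1(N_C) = 0$.

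To promote these particular divisors to the \emph{general} divisor of degree $n$, I would invoke upper semicontinuity of $h^0$ and $h^1$ on the irreducible variety $\Sym^n C$: the existence of a single divisor of degree $n$ realizing $h^0 = 0$ (respectively $h^1 = 0$) forces the corresponding locus to be open and nonempty, hence dense. Since interpolation only requires one of the two vanishings to hold for general $D$ of each degree, this completes the argument.

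I do not anticipate a serious obstacle; the argument is essentially formal given the hypothesis. The only real content is the translation of $h^1 = 0$ at degree $2d$ into $h^0 = 0$ at degree $2d$ via the Euler characteristic computation, after which pure monotonicity handles the two regimes $n \leq 2d$ and $n \geq 2d$.
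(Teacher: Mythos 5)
Your proposal is correct and follows essentially the same route as the paper's proof: the Euler characteristic argument converts $h^1=0$ into $h^0=0$ at $\Gamma$, monotonicity under adding/removing points handles the two degree regimes, and semicontinuity together with the irreducibility of $\Sym^n C$ upgrades the particular divisors to general ones. The only difference is that you spell out the exact sequence behind the monotonicity, which the paper takes for granted.
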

\begin{proof}
Under the assumption $\deg \Gamma = 2d$, we have $\chi(N_C(-\Gamma))=0$.  So $h^1(N_C(-\Gamma)) = 0$ implies that $h^0(N_C(-\Gamma)) = 0$ as well.  If we let $D = E + p$ for some effective divisor $E$ and point $p$, then $h^1(N_C(-E)) \leq h^1(N_C(-D))$ and $h^0(N_C(-D)) \leq h^0(N_C(-E))$.  So by adding and subtracting points from $\Gamma$ we can find effective divisors $D_n$ of every positive degree such that $h^1(N_C(-D_n)) = 0$ if $n \leq 2d$, and $h^0(N_C(-D_n)) = 0$ if $n > 2d$.

Furthermore, if there exists some effective divisor $F$ such that $h^1(N_C(-F)) = 0$ (resp. $h^0(N_C(-F)) = 0$), the semicontinuity theorem implies that $h^1(N_C(-D)) = 0$ (resp. $h^0(N_C(-D)) = 0$) for all $D$ in a nonempty open neighborhood of $F$, which is necessarily dense in $\Sym^{n}C$ as $\Sym^{n}C$ is irreducible.  
\end{proof}

As $\O_C(2)$ is an effective divisor of degree $2d$, all counter-examples to Proposition \ref{mainprop} are necessarily contained in the counterexamples to Theorem \ref{quadric}, so it suffices to verify Proposition \ref{mainprop} in these six cases.  The result of \cite{nasko} in the nonspecial range proves interpolation for $(d,g) = (4,1)$ and $(6,2)$.

%Canonical curves $C$ of genus $4$ fail to satisfy interpolation: the normal bundle is $2K \oplus 3K$, where $K$ is the canonical divisor.  So if $D$ is a general collection of $12$ point on $C$, 
%\[h^1(2K-D) + h^1(3K-D) = h^0(-K+D) + h^0(-2K + D). \]
%The line bundle $\O(-K +D)$ is general of degree $6$, and hence has $6 +1 - 4 = 3$ sections.

Thus to prove Proposition \ref{mainprop} it suffices to consider two cases: curves of degree $7$ and genus $5$, and curves of degree $8$ and genus $6$.  As the property of interpolation is open \cite[Thm 5.8]{nasko}, and the restricted Hilbert scheme parameterizing (limits of) smooth curves of $(d,g) = (7,5)$, respectively $(8,6)$, is irreducible \cite[Thm 2.7]{keem} , it suffices to exhibit one such curve whose normal bundle satisfies interpolation.  Using this, we resolve these two cases in the remaining two sections of the paper.

Although both cases are resolved by realizing $C \subset \pp^3$ as the projection of a canonical curve, the techniques are quite different.  For curves of degree $7$ and genus $5$, we give an explicit description of the sheaf maps arising in a short exact sequence containing $N_{C/\pp^3}$.  This reduces the problem to understanding the generators of the homogenous ideal of a collection of points in $\pp^2$.  For curves of degree $8$ and genus $6$, we find our curve lying on a singular cubic surface.  The ``normal bundle" of $C$ in the singular cubic is then more positive than if $C$ lay on a smooth cubic, which then gives the result.

\begin{rem}
Semistability of normal bundles has previously been studied in \cite{ran, sac} and \cite{ein} for rational curves and elliptic normal curves respectively.
The property of interpolation is quite analogous to that of semistability.  In fact, if $E$ satisfies interpolation and the rank of $E$ divides the Euler characteristic, then $E$ is semistable; indeed, if $E$ satisfies interpolation and $F \subset E$ is a subbundle, then 
\[\frac{\chi(F)}{\rk(F)} \leq \left\lceil \frac{\chi(E)}{\rk(E)} \right\rceil. \]
The converse is not true, even when restricted to bundles which are nonspecial.  To construct an example, let $x$ and $y$ be general points on a curve $C$ of genus $2$.  Then $|K_C(x+y)|$ defines a map $\pi \colon C \to \pp^2$ with image a quartic with a single node.  The vector bundle $ \pi^*(T_{\pp^2})$ has a subbundle of tangent directions pointing towards the node, which is isomorphic to $K_C(2x + 2y)$.  Let $E$ be $\pi^*(T_{\pp^2})(-2x-2y)$.  Then $E$ is rank $2$ and degree $4$.  Further, by the above we have
\[0 \to K_C \to E \to K_C^{\otimes 2}(-x-y) \to 0. \]
As $\deg(K_C) = \deg(K_C^{\otimes 2}(-x-y)) = 2$, $E$ is semistable.  But for any $p \in C$, $h^0(K_C(-p)) = 1$, so $h^0(E(-p)) > 0$.  Thus $E$ does not satisfy interpolation as $\chi(E(-p)) =0 $.  With slightly more work, one can show that $E$ is nonspecial.
\end{rem}

\subsection*{Acknowledgements}  I thank Joe Harris and Bjorn Poonen for numerous helpful discussions and guidance.  I would also like to thank Eric Larson for pointing out that his work in \cite{larson} left this problem within reach, and together with Aaron Landesman, Lawrence Ein, Izzet Coskun and members of the MIT and Harvard mathematics departments for helpful conversations.  Finally, I would like to acknowledge the generous support of the NSF Graduate Research Fellowship Program.

\section{Curves of Degree 7 and Genus 5}

For the remainder of this section, let $C \hookrightarrow \pp^3$ be a curve of degree $7$ and genus $5$ in $\pp^3$.  Let $\Gamma \subset C$ be a general collection of $14$ points on $C$.  We have $\deg(N_C) = 36$ and $\deg(N_C(-\Gamma)) = 8$.  In order to prove that $N_C$ satisfies interpolation, it suffices by Lemma \ref{genpts} to show 
\[ H^1(C, N_C(-\Gamma)) =0.\]

\begin{rem}\label{chi0}
As used in the proof of Lemma \ref{genpts}, when $\deg \Gamma = 2d$, $\chi(N_C(-\Gamma)) = 0$.  So $h^1(N_C(-\Gamma)) = 0 $ if and only if $h^0(N_C(-\Gamma)) = 0$.
\end{rem}

%From the exact sequence 
%\[0 \to T_C \to T_{\pp^3}|_C \to N_C \to 0,\]
%we see that 
%\[ \deg N_C = 7\cdot 4 + 8 = 36. \]
%So $\deg N_C(-\Gamma) = 36 - 2\cdot 14 = 8$.  Hence by Riemann-Roch, we have that $\chi(N_C(-\Gamma)) = 8+2\cdot(1-5)= 0$.  Thus $h^1(N_C(-\Gamma)) = 0$ if and only if $h^0(N_C(-\Gamma)) = 0$.

\begin{lem}\label{rrcalc}
Let $L$ be a line bundle of degree $7$ on $C$ with $h^0(L) = 4$.  Then $L = K-p$ for some point $p$ on $C$.
\end{lem}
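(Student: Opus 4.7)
The proof is a direct application of Riemann--Roch and Serre duality. The plan is to show that the residual line bundle $K_C \otimes L^{-1}$ has degree $1$ and exactly one section, hence equals $\mathcal{O}_C(p)$ for a unique point $p \in C$.

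First, I would apply Riemann--Roch to $L$ on the genus-$5$ curve $C$:
\[
h^0(L) - h^1(L) = \deg(L) + 1 - g = 7 + 1 - 5 = 3.
\]
Using the hypothesis $h^0(L) = 4$, this forces $h^1(L) = 1$.

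Next, by Serre duality, $h^0(K_C \otimes L^{-1}) = h^1(L) = 1$, so the line bundle $K_C \otimes L^{-1}$ admits a nonzero section. Its degree is $\deg(K_C) - \deg(L) = (2g-2) - 7 = 8 - 7 = 1$. A degree-$1$ line bundle with a nonzero section must be of the form $\mathcal{O}_C(p)$ for a (necessarily unique) point $p \in C$, since the only effective divisors of degree $1$ are single points.

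Finally, tensoring the isomorphism $K_C \otimes L^{-1} \cong \mathcal{O}_C(p)$ with $L$ gives $L \cong K_C(-p)$, as required. There is no significant obstacle: the argument is essentially a one-line Riemann--Roch computation together with the observation that $\mathrm{Pic}^1(C)$ is cut out effectively only by point divisors.
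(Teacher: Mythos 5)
Your proof is correct and follows exactly the paper's argument: Riemann--Roch gives $h^1(L)=1$, Serre duality gives $h^0(K_C-L)=1$, and a degree-$1$ line bundle with a section is $\mathcal{O}_C(p)$. You have merely written out the computations that the paper leaves implicit.
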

\begin{proof}
By Riemann-Roch, $h^1(L) = 1$ and hence by Serre duality $h^0(K- L) = 1$.  Thus there exists a unique point $p$ such that $K-L = p$.
\end{proof}

As such, every curve $C$ of degree $5$ and genus $7$ in $\pp^3$ is the projection of a canonical curve $\tilde{C} \hookrightarrow \pp^4$ from a point $ p \in \tilde{C}$.  Call this projection map $\pi \colon \tilde{C} \to C$.  Furthermore, $C \simeq \tilde{C}$ is not trigonal, as projection from $p$ defines an embedding into $\pp^3$.

Let $S \subset \pp^4$ be the cone over $\tilde{C}$ with vertex $p$.  By normal bundle $N_{\tilde{C}/S}$ of $\tilde{C}$ in this (singular) surface $S$, we mean the unique subbundle (i.e. subsheaf with locally free quotient) of $N_{\tilde{C}/\pp^4}$ that agrees with the normal bundle of $C \smallsetminus p$ in $S \smallsetminus p$.  Sections of this bundle have a geometric interpretation as the normal directions that point towards $p$; for this reason, in \cite{joint} $N_{\tilde{C}/S}$ is referred to as $N_{\tilde{C} \to p}$.

\begin{lem}
The sequence
\begin{equation}\label{exact} 0 \to N_{\tilde{C}/S} \to N_{\tilde{C}/\pp^4} \to \pi^* N_{C/\pp^3}(p) \to 0, \end{equation}
is exact.
\end{lem}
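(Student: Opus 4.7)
The plan is to construct an explicit surjective morphism
\[ \Phi \colon N_{\tilde{C}/\pp^4} \twoheadrightarrow \pi^*N_{C/\pp^3}(p) \]
whose kernel is $N_{\tilde{C}/S}$. The map $\Phi$ will come from the differential of $\pi$; the twist by $\mathcal{O}(p)$ appears because $\pi$ is only a rational map from $\pp^4$, with base point $p$.

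I would work in an affine chart on $\pp^4$ with $p=0$ and the tangent line to $\tilde{C}$ at $p$ along the last coordinate axis. A short computation in a local uniformizer $s$ at $p$ shows that $d\pi$, regarded as a map of sheaves $T_{\pp^4}|_{\tilde C} \to \pi^* T_{\pp^3}$, acquires a simple pole along $\tilde{C}$ at $p$, so it extends to a regular morphism
\[d\pi \colon T_{\pp^4}|_{\tilde{C}} \longrightarrow \pi^*T_{\pp^3}(p),\]
which the same local computation shows is surjective (including at $p$). Since $\pi|_{\tilde{C}} \colon \tilde{C} \to C$ is an honest morphism, its derivative carries $T_{\tilde{C}}$ into $\pi^* T_C$, and a fortiori into $\pi^* T_C(p)$. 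Passing to the induced map on quotients yields the desired surjection $\Phi$.

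To identify $\ker \Phi$ with $N_{\tilde{C}/S}$: away from $p$, the projection realizes $S\setminus\{p\}$ as the preimage of $C$ under the smooth $\mathbb{A}^1$-bundle $\pp^4\setminus\{p\}\to\pp^3$; hence $N_{S/\pp^4}|_{\tilde{C}\setminus\{p\}} \cong \pi^*N_{C/\pp^3}|_{\tilde{C}\setminus\{p\}}$, and the standard normal bundle short exact sequence for the chain $\tilde{C}\subset S\subset\pp^4$ over the smooth locus of $S$ shows that $\ker \Phi$ restricted to $\tilde{C}\setminus\{p\}$ coincides with the normal bundle of $\tilde{C}\setminus\{p\}$ in $S\setminus\{p\}$. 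Since $\ker \Phi$ is a line subbundle of $N_{\tilde{C}/\pp^4}$ (being the kernel of a surjection onto a locally free sheaf), and $N_{\tilde{C}/S}$ is by definition the unique such subbundle extending the generic normal bundle, the two must coincide.

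The main technical obstacle is the local calculation near $p$: one must verify that the pole of $d\pi$ is of order exactly one (so that the twist in the target is $\mathcal{O}(p)$ rather than $\mathcal{O}(kp)$ for some larger $k$), and that the extended morphism remains surjective at $p$. Both reduce to the observation that $p$ is a smooth point of $\tilde{C}$ and that in suitably chosen coordinates the parametrization of $\tilde{C}$ vanishes to second order in every coordinate except the one distinguishing its tangent direction.
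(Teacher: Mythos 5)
Your proof is correct and is essentially the paper's argument: the paper resolves $\pi$ by blowing up $p$ and invokes the identification $N_{\tilde{C}/\Bl_p\pp^4} \simeq N_{\tilde{C}/\pp^4}(-p)$, which is exactly your local statement that $d\pi$ acquires a simple pole along $\tilde{C}$ at $p$ and becomes a surjection onto $\pi^*T_{\pp^3}(p)$ after twisting. Your verification that the kernel is the saturated subsheaf $N_{\tilde{C}/S}$ is in fact more detailed than the paper's one-line assertion that the kernel consists of the normal directions in the cone $S$.
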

\begin{proof}
The projection map $\pi \colon \pp^4 \dashedrightarrow \pp^3$ is resolved by blowing up $\pp^4$ at the point $p$ and as such there is a regular map $\tilde{\pi} \colon \Bl_p\pp^4 \to \pp^3$.  Hence there is a surjective map of sheaves $N_{\tilde{C}/\Bl_p\pp^4} \to N_{C/\pp^3}$.  But $N_{\tilde{C}/\Bl_p \pp^4} \simeq N_{\tilde{C}/\pp^4}(-p)$.  Twisting by $p$ we obtain the right map of \eqref{exact}.  The kernel is the normal directions in the cone $S$. \qedhere
\end{proof}

By \cite[Prop. 6.3]{joint}, we have that $N_{\tilde{C}/S} \simeq \O_{\tilde{C}}(1)(2p)$.  To see this, we need to recall the definition of the Euler field.  Let $V$ be a vector space and $V = V_1 \oplus V_2$ a decomposition with $\dim V_1 = 1$.  Define a $\cc^*$ action on $V$ by
\[\lambda\cdot(x+y) = \lambda x + y, \qquad \lambda \in \cc^*, x \in V_1, y \in V_2. \]
This action descends to $\pp V$, where the decomposition $V = V_1 \oplus V_2$ corresponds to the choice of a point $p$ and a complementary hyperplane $H$.  The \defi{Euler field} associated to $(p, H)$ is the differential of this action at $\lambda = 1 \in \cc^*$.

The Euler field vanishes only along $p$ and $H$.  And further if $p$ is a general point of $\tilde{C}$, $p$ does not lie on any other tangent line.  The restriction of the Euler field to $\tilde{C}$ provides a section of the normal bundle $N_{\tilde{C}/S}$, which vanishes along $\tilde{C} \cap H$ and to some order at $p$.  Explicit calculation gives that this order is $2$.

Furthermore, as $\tilde{C}$ is not trigonal, it is the complete intersection of the net of quadrics on which it lies; hence we have that $N_{\tilde{C}/\pp^4} \simeq \O_{\tilde{C}}(2)^{\oplus 3}$.

The key geometric input to this entire argument is the following lemma, which give a description of the injection in \eqref{exact} in terms of the identifications $N_{\tilde{C}/S} \simeq \O_{\tilde{C}}(1)(2p)$ and $N_{\tilde{C}/\pp^4} \simeq \O_{\tilde{C}}(2)^{\oplus 3}$.

\begin{lem}\label{detmap}
The map
\[ \O_{\tilde{C}}(1)(2p)  \xrightarrow{\alpha} \O_{\tilde{C}}(2)^{\oplus 3} \]
in sequence \eqref{exact} is the unique map (up to isomorphism) given by multiplication by three linearly independent sections of $H^0(K_{\tilde{C}}(-2p))$.
\end{lem}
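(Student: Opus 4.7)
The plan is first to show that a map $\O_{\tilde{C}}(1)(2p) \to \O_{\tilde{C}}(2)^{\oplus 3}$ is determined by three sections of a three-dimensional space, so that the uniqueness claim follows formally, and then to identify the particular sections defining $\alpha$ with three linearly independent linear forms. To begin, note that such a map is equivalent to three sections of $\Hom(\O_{\tilde{C}}(1)(2p), \O_{\tilde{C}}(2)) = K_{\tilde{C}}(-2p)$ (using $\O_{\tilde{C}}(1) = K_{\tilde{C}}$). Riemann--Roch for this degree-$6$ line bundle on $\tilde{C}$ of genus $5$, together with $h^1(K_{\tilde{C}}(-2p)) = h^0(\O_{\tilde{C}}(2p)) = 1$ (as $\tilde{C}$ is non-hyperelliptic), gives $h^0(K_{\tilde{C}}(-2p)) = 3$. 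Any three linearly independent sections therefore span this whole space, and any two such bases differ by an element of $\GL_3(\cc)$ acting as a change of basis on the target $\O_{\tilde{C}}(2)^{\oplus 3}$; the induced maps are isomorphic. So the lemma reduces to showing that the three sections $s_1, s_2, s_3 \in H^0(K_{\tilde{C}}(-2p))$ defining $\alpha$ are linearly independent.

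To identify these sections, I would use the geometric meaning of $\alpha$: at $q \in \tilde{C} \setminus \{p\}$, the image of $\alpha|_q$ is the line in $N_{\tilde{C}/\pp^4}|_q$ spanned by the direction from $q$ toward $p$. Under the identification $N_{\tilde{C}/\pp^4} \simeq \O_{\tilde{C}}(2)^{\oplus 3}$ coming from the defining quadrics $Q_1, Q_2, Q_3$, a normal vector $v$ at $q$ maps to $(dQ_1|_q(v), dQ_2|_q(v), dQ_3|_q(v))$. Taking affine coordinates with $p = 0$ and writing $Q_i = \ell_i + q_i$ with $\ell_i$ linear and $q_i$ purely quadratic, the direction from $q \in \tilde{C}$ to $p$ is proportional to $-q$; combining $Q_i(q) = 0$ with Euler's identity $dq_i|_q(q) = 2q_i(q)$ yields
\[ dQ_i|_q(-q) = -\ell_i(q) - 2q_i(q) = \ell_i(q). \]
Hence in these trivializations $\alpha$ is multiplication by the triple $(\ell_1, \ell_2, \ell_3)$, and the $s_i$ correspond to the three linear forms $\ell_i$ restricted to $\tilde{C}$ and viewed as sections of $K_{\tilde{C}}(-2p)$.

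Finally, the $\ell_i$ are linearly independent thanks to the smoothness of $\tilde{C}$ at $p$. Each $\ell_i = dQ_i|_p \in T^*_p\pp^4$ annihilates $T_p\tilde{C}$, so descends to $N^*_p\tilde{C}$; since $\tilde{C}$ is smooth of dimension $1$ at $p$ and cut out by $Q_1, Q_2, Q_3$, the common kernel $\bigcap_i \ker(\ell_i) = T_p\tilde{C}$ is one-dimensional, forcing the three $\ell_i$ to span the three-dimensional conormal space $N^*_p\tilde{C}$. Via the canonical isomorphism $N^*_p\tilde{C} \simeq H^0(K_{\tilde{C}}(-2p))$ (restriction of linear forms from $\pp^4$), this is precisely the desired linear independence of the $s_i$ in $H^0(K_{\tilde{C}}(-2p))$.

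The main obstacle is the bookkeeping in the middle step: the identification $N_{\tilde{C}/S} \simeq \O_{\tilde{C}}(1)(2p)$ is determined only up to a scalar by the Euler section, and $N_{\tilde{C}/\pp^4} \simeq \O_{\tilde{C}}(2)^{\oplus 3}$ is determined only up to $\GL_3(\cc)$ by the choice of defining quadrics, so these normalizations must be tracked carefully through the affine computation. An alternative that sidesteps this bookkeeping is an argument by contradiction: linear dependence of the $s_i$ would let one factor $\alpha$ through $\O_{\tilde{C}}(2)^{\oplus 2}$ after a $\GL_3$ change of basis on the target, which by the same local calculation forces some quadric in the ideal of $\tilde{C}$ to vanish to order at least $2$ at $p$, contradicting smoothness of $\tilde{C}$ at $p$.
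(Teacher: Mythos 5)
Your proof is correct and follows essentially the same route as the paper: identify $\alpha$ with three sections of $H^0(K_{\tilde{C}}(-2p))$, show these are the linear forms cutting out the tangent hyperplanes $T_pQ_i$, and deduce linear independence from the fact that the quadrics cut out $\tilde{C}$ scheme-theoretically at the smooth point $p$. The only difference is presentational: where the paper invokes polar reciprocity for quadrics (``any point $q$ with $\bar{q,p} \subset T_qQ_i$ is contained in $T_pQ_i$''), you carry out the equivalent affine computation $dQ_i|_q(-q) = \ell_i(q)$ explicitly, and you spell out the dimension count and the uniqueness-up-to-$\GL_3$ step that the paper leaves implicit.
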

\begin{proof}
Such a map is specified by three sections of $\underline{\Hom}(\O(1)(2p), \O(2)) = H^0(\O(1)(-2p)) = H^0(K_{\tilde{C}}(-2p))$.  In our case, the map $\alpha$ comes from restricting the Euler field associated to $(p, H)$ to (a choice of) three independent quadrics $Q_1, Q_2, Q_3$ defining $C$.  As a section of $N_{Q_i} \simeq \O_{Q_i}(2)$, the Euler field vanishes along $H$ and $T_pQ_i$, as any point $q$ with $\bar{q,p} \subset T_qQ_i$ is contained in $T_pQ_i$.  The result follows as the three tangent planes $T_pQ_1, T_pQ_2, T_pQ_3$ are linearly independent sections of $H^0(\O_{\tilde{C}}(1)(-2p))$.
\end{proof}

Consider the twist of exact sequence \eqref{exact} by the line bundle $\O_{\tilde{C}}(-p-\Gamma)$:
\begin{equation}\label{twist} 0 \to K_{\tilde{C}}(p-\Gamma) \to K_{\tilde{C}}^{\otimes 2}(-p-\Gamma)^{\oplus 3} \to \pi^*N_{C/\pp^3}(-\Gamma) \to 0. \end{equation}

\begin{lem}
The natural map
\[H^1(\tilde{C}, K_{\tilde{C}}(p-\Gamma)) \to H^1(\tilde{C}, K_{\tilde{C}}^{\otimes 2}(-p-\Gamma)^{\oplus 3}) \]
induced by $\alpha$ is an isomorphism.
\end{lem}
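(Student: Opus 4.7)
The plan is to deduce the stated isomorphism from the surjectivity of a natural multiplication map on $\pp^2$. Since both cohomology groups have dimension $9$ --- for the source this follows from Serre duality and the nonspeciality of the degree-$13$ divisor class $\Gamma-p$, while for the target we compute $\chi(K_{\tilde C}^{\otimes 2}(-p-\Gamma))=-3$ and $h^0=0$ generically, giving $h^1 = 3$ in each of the three summands --- it suffices to show the induced map is surjective. By Serre duality, this is equivalent to surjectivity of the multiplication map
\[ \mu \colon H^0(K_{\tilde C}(-2p)) \otimes H^0(K_{\tilde C}^{-1}(p+\Gamma)) \to H^0(\O_{\tilde C}(\Gamma - p)), \]
where we use that the three sections $s_1, s_2, s_3$ from Lemma \ref{detmap} form a basis of the three-dimensional space $H^0(K_{\tilde C}(-2p))$.

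The next step is to interpret $\mu$ on $\pp^2$. Because $\tilde C$ is non-trigonal, $|K_{\tilde C} - 2p|$ is base-point-free; for a generic Brill-Noether curve $\tilde C$ of genus $5$ this linear system defines a morphism $\phi \colon \tilde C \to \pp^2$ that is birational onto its image, a plane sextic $Y$ with five nodes. Setting $W \colonequals \phi_* K_{\tilde C}^{-1}(p+\Gamma)$ and identifying $H^0(K_{\tilde C}(-2p))$ with linear forms on $\pp^2$, the projection formula rewrites $\mu$ as the natural multiplication
\[ H^0(\pp^2, \O_{\pp^2}(1)) \otimes H^0(\pp^2, W) \to H^0(\pp^2, W(1)). \]
Twisting the Euler sequence on $\pp^2$ by $W$ and using $H^1(\pp^2, W) = H^1(\tilde C, K_{\tilde C}^{-1}(p+\Gamma)) = 0$, surjectivity of $\mu$ is equivalent to the vanishing $H^1(\pp^2, \Omega^1_{\pp^2}(1) \otimes W) = 0$.

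The final step is to translate this vanishing into a statement about generators of the homogeneous ideal of the $14$ points $\phi(\Gamma) \subset \pp^2$. Using the nature of $\Omega^1_{\pp^2}(1)$ (which encodes syzygies of coherent sheaves on $\pp^2$) together with an explicit description of $W$ involving $\I_{\phi(\Gamma)}$, the cohomology we need to vanish becomes a condition on generators of $\I_{\phi(\Gamma)}$ in low degree. For generic $\Gamma$, the image $\phi(\Gamma)$ consists of $14$ general points on $Y$, and the ideal $\I_{\phi(\Gamma)}$ is expected to have the generic Hilbert function --- for instance, no cubic vanishes on the $14$ points, and exactly one quartic does --- from which the required generation statement, and hence the cohomological vanishing, will follow. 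The main obstacle is carrying out this last reduction carefully: matching up $W$ with the appropriate construction from $\I_{\phi(\Gamma)}$, and identifying precisely which ideal generation statement is equivalent to the vanishing $H^1(\pp^2, \Omega^1_{\pp^2}(1) \otimes W) = 0$.
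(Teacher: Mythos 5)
Your opening reduction is correct and agrees with the paper: both groups are $9$-dimensional, and Serre duality converts the statement into surjectivity (equivalently bijectivity) of the multiplication map $\mu \colon H^0(K_{\tilde C}(-2p)) \otimes H^0(K_{\tilde C}^\vee(p+\Gamma)) \to H^0(\O_{\tilde C}(\Gamma - p))$, using that the three sections from Lemma \ref{detmap} span $H^0(K_{\tilde C}(-2p))$ and that $L := K_{\tilde C}^\vee(p+\Gamma)$ is a \emph{general} line bundle of degree $7$ (because $\deg \Gamma = 14 > \dim \Pic(C)$). This is exactly the paper's Proposition \ref{tensoriso}. From there you diverge: you map to $\pp^2$ by $|K_{\tilde C}(-2p)|$ (a $5$-nodal sextic) and, via the Euler sequence, correctly reduce surjectivity of $\mu$ to the vanishing $H^1(\pp^2, \Omega^1_{\pp^2}(1) \otimes W) = 0$ with $W = \phi_* L$. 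The paper instead maps by the \emph{other} factor $|L|$, realizing $C$ as a $10$-nodal septic so that $|K_{\tilde C}(-2p)|$ becomes the adjoint series of quartics through a length-$12$ scheme $\Sigma$ ($10$ nodes plus a tangent vector at $p$); it then proves $\Sigma$ is a general such scheme via a Severi-variety smoothness argument and finishes by specializing $\Sigma$ onto a smooth plane quartic and applying the basepoint-free pencil trick.

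The genuine gap is your final step, which you yourself flag as not carried out: you never prove $H^1(\pp^2, \Omega^1_{\pp^2}(1) \otimes W) = 0$, and the proposed reduction to ``generators of $\I_{\phi(\Gamma)}$ in low degree'' does not obviously make sense. The divisor $\Gamma$ enters $W = \phi_*\bigl(K_{\tilde C}^\vee(p+\Gamma)\bigr)$ with a \emph{positive} twist, so sections of $W$ and $W(1)$ are not forms vanishing at $\phi(\Gamma)$, and $W$ is a torsion sheaf on $\pp^2$ whose structure also involves the five nodes of the sextic and the point $p$. If one unwinds the vanishing by Serre duality on $\tilde C$ instead, it becomes $h^0\bigl(\phi^* T_{\pp^2} \otimes K_{\tilde C}(p)(-\Gamma)\bigr) = 0$, i.e.\ an interpolation-type statement for a rank-$2$ bundle of degree $27$ twisted down by $14$ general points --- a problem of essentially the same nature and difficulty as the one you started with, not a Hilbert-function computation for $14$ points. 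So the heart of the argument is missing; you would need to either carry out the paper's route (adjoint series on the septic model, genericity of $\Sigma$, basepoint-free pencil trick) or supply a genuinely different proof of the rank-$2$ vanishing above.
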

\begin{proof}
By Serre duality, it suffices to show that the map
\[H^0(K_{\tilde{C}}^\vee(p + \Gamma)^{\oplus 3}) \to H^0(\O_{\tilde{C}}(-p + \Gamma)) \]
induced by $\alpha$ is an isomorphism.  As $14 > 5 = \dim \Pic(C)$, the line bundle $L := K_{\tilde{C}}^\vee(p + \Gamma)$ is a general line bundle of degree $-8 + 15 = 7$.  By Lemma \ref{detmap} this map is simply the tensor product map
\[H^0(L) \otimes H^0(K_{\tilde{C}}(-2p)) \to H^0(L \otimes K_{\tilde{C}}(-2p)). \]
The proof therefore follows from the following proposition.
\end{proof}

\begin{prop}\label{tensoriso}
Let $L$ be a general line bundle of degree $7$ and $p$ a general point on a general curve $C$ of genus $5$.  Then the tensor product map
\[H^0(L)  \otimes H^0(K(-2p)) \to H^0(L \otimes K(-2p)), \]
is an isomorphism.
\end{prop}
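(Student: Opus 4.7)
The plan is to translate the tensor-product map into a concrete polynomial-multiplication statement in $\pp^2$ via the morphism $\varphi_L$. For general $C$, $L$ and $p$, the complete linear system $|L|$ defines a birational morphism $\nu \colon C \to C'' \subset \pp^2$ with $C''$ a plane septic whose only singularities are $\binom{6}{2} - 5 = 10$ nodes. Set $q := \nu(p)$, and let $Z \subset \pp^2$ be the length-$12$ subscheme consisting of these $10$ nodes together with the length-$2$ scheme at $q$ cut out by the tangent line $T_qC''$ (locally $V(\ell, \m_q^2)$ where $\ell$ cuts out the tangent). By classical adjunction for a plane nodal curve, $K_C \sim 4H - \sum_i (p_i + p'_i)$, where $H$ is the pullback of the hyperplane class and $\{p_i, p'_i\}$ are the preimages of the nodes. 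This produces canonical identifications
\[
H^0(L) \cong H^0(\pp^2, \O(1)), \qquad H^0(K(-2p)) \cong H^0(\pp^2, \I_Z(4)), \qquad H^0(L \otimes K(-2p)) \cong H^0(\pp^2, \I_Z(5)),
\]
under which the multiplication map of the proposition becomes ordinary polynomial multiplication
\[
\mu \colon H^0(\pp^2, \O(1)) \otimes H^0(\pp^2, \I_Z(4)) \longrightarrow H^0(\pp^2, \I_Z(5)).
\]

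Both sides have dimension $9$, so it suffices to show $\mu$ is injective. For a basis $Q_1, Q_2, Q_3$ of $H^0(\I_Z(4))$, this amounts to the statement that no nonzero triple $(\ell_1, \ell_2, \ell_3)$ of linear forms satisfies $\sum \ell_i Q_i = 0$ in $\cc[x,y,z]_5$. (Any such relation in $H^0(\I_Z(5))$ is automatically a relation in $\cc[x,y,z]_5$, since the ideal of the septic $C''$ begins in degree $7$.) Equivalently, the minimal free resolution of $I_Z$ must have the Hilbert--Burch form
\[
0 \longrightarrow \O(-6)^{\oplus 2} \longrightarrow \O(-4)^{\oplus 3} \longrightarrow I_Z \longrightarrow 0,
\]
so that there are no linear syzygies on the three quartic generators. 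This is exactly the resolution of a generic length-$12$ zero-dimensional subscheme of $\pp^2$, as one sees from the expected Hilbert function $(1,3,6,10,12,12,\ldots)$.

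The main obstacle is to verify that our particular $Z$ — whose shape is dictated by the canonical geometry of the genus-$5$ curve, not chosen freely — is sufficiently general to inherit this generic resolution. Since the property ``$I_Z$ has the expected Hilbert function and no linear syzygy among the generators of $(I_Z)_4$'' is open in $\Hilb^{12}(\pp^2)$, I would close the argument by either (i) exhibiting a single explicit triple $(C, L, p)$ for which $Z$ can be analysed directly, writing down $10$ explicit plane points and a tangent direction at an $11$th point and computing the Betti table by hand; or (ii) observing that the map from the $18$-dimensional moduli space of triples $(C, L, p)$ into $\Hilb^{12}(\pp^2)/\PGL_3$ (which has dimension $16$) is dominant, so that a general triple produces a general $Z$. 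Either approach realises the reduction ``to understanding the generators of the homogeneous ideal of a collection of points in $\pp^2$'' advertised in the paper's introduction.
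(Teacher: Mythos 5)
Your reduction to the plane is exactly the paper's: project by $|L|$ to a $10$-nodal septic, let $Z$ be the $10$ nodes plus the tangent vector at the image of $p$, identify the three spaces with $H^0(\O(1))$, $H^0(\I_Z(4))$, $H^0(\I_Z(5))$, and observe that since both sides have dimension $9$ the statement is equivalent to the absence of a linear syzygy among the three quartic generators of $I_Z$. You also correctly isolate the crux: one must show that the configuration $Z$ forced on you by the canonical geometry is general enough. But neither of your two proposed ways of closing the argument is actually a proof. Option (ii) is the serious gap: a map from an $18$-dimensional space to a $15$- or $16$-dimensional one need not be dominant --- the image could perfectly well be a proper subvariety --- so ``observing'' dominance from a dimension count is not legitimate. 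Proving this dominance is precisely the content of the paper's Lemma \ref{irredu}, which sets up the incidence correspondence over the Severi variety $U_{7,5}$ and shows the forgetful map to $\Sym^{10}(\pp^2)\times H$ is \emph{smooth} at a general point by computing that the obstruction $H^1(N_f(-f^{-1}(N)-2p))$ vanishes (degree count on $N_f\simeq K_{C^\nu}\otimes f^*\O(3)$ plus a generic-ramification argument for the $-2p$). That deformation-theoretic input is missing from your write-up. Option (i) is a valid strategy in principle (the space of triples $(C,L,p)$ is irreducible and the condition is open), but you have not produced the explicit example, so as written it is a promissory note.

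There is a second, smaller gap even if dominance is granted: your $Z$ lands in the stratum of ``$10$ reduced points plus a tangent vector,'' which is a proper closed subvariety of $\Hilb^{12}(\pp^2)$ (of dimension $23$, versus $24$), so genericity in that stratum does not let you quote the resolution of $12$ \emph{general} points; and in any case the Hilbert function $(1,3,6,10,12,12,\dots)$ alone does not determine the graded Betti numbers --- a linear syzygy can be cancelled by an extra quintic generator without changing the Hilbert function. One still needs an argument that the general point of the stratum has no linear syzygy. The paper supplies this by specializing $\Sigma$ to lie on a smooth quartic $X$ (using openness of the desired condition) and applying the basepoint-free pencil trick to $H^0(K_X)\otimes H^0(4K_X-D)\to H^0(5K_X-D)$, whose kernel $H^0(3K_X-D)$ vanishes for general $D$ of degree $12$. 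You would need to either reproduce that step or replace it with an explicit Betti-table computation.
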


As $L$ is general, we have $h^0(L) = 3$ and the complete linear series $|L|$ maps $C$ birationally onto a plane septic with generically $10$ nodes.  The linear series $|K(-2p)|$ is therefore cut by quartics passing through the $10$ nodes, and tangent to the septic at $p$.  Let $\Sigma$ denote the zero-dimensional subscheme in $\pp^2$ of deg 12 consisting of the 10 nodes and the tangent vector at $p$.  Let $\I_\Sigma$ be its ideal sheaf.  Then it suffices to show
\begin{equation}\label{idealmap} H^0(\O_{\pp^2}(1)) \otimes H^0(\I_\Sigma(4)) \to H^0(\I_\Sigma(5)) \end{equation}
is an isomorphism.  

We proceed by first showing that the subscheme $\Sigma$ is general among all such degree $12$ subschemes with multiplicity $1$ at $10$ points and multiplicity $2$ at the last point.  Then we will restrict to a canonical curve containing the points and apply the basepoint free pencil trick.

Let $V_{7,5}$ be the Severi variety parameterizing degree $7$ plane curves of geometric genus $5$ and $U_{7,5} \subset V_{7,5}$ the open dense locus of nodal curves with exactly 10 nodes.  There is a map $U_{7,5} \to \Sym^{10}(\pp^2)$ extracting the $10$ nodes.  
%By \cite{harris}, the variety $U_{7,3}$ is irreducible of dimension ${9 \choose 2} - 10 = 26$.  
Let $H \subset \Hilb^2(\pp^2)$ be locus parameterizing degree $2$ subschemes supported at a single point.  Let $S$ be the incidence correspondence
\[S = \{(C, N, t) \subset U_{7,5} \times \Sym^{10}(\pp^2) \times H \mid N = C_{\text{sing}}, t \subset C \}. \]

\begin{lem}\label{irredu}
The map
\[\pi \colon S \to \Sym^{10}(\pp^2) \times H \]
extracting the nodes and the tangent vector is dominant.
\end{lem}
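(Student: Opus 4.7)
The plan is to combine a dimension count with exhibiting a single fiber of the expected dimension. First, $V_{7,5}$ has dimension $3d + g - 1 = 25$, and $S \to U_{7,5}$ has one-dimensional fibers (choice of a tangent vector at a smooth point of the curve), so $\dim S = 26$. The target has dimension $\dim \Sym^{10}(\pp^2) + \dim H = 20 + 3 = 23$, giving expected fiber dimension $3$. Since $V_{7,5}$, and thus $U_{7,5}$ and $S$, is classically known to be irreducible, upper semicontinuity of fiber dimension reduces the problem to exhibiting a single pair $(N, t)$ whose fiber $\pi^{-1}(N, t)$ is nonempty of dimension at most $3$: if this holds then the generic fiber of $\pi$ also has dimension at most $3$, so $\dim \overline{\pi(S)} \geq 26 - 3 = 23$, forcing dominance.

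The fiber $\pi^{-1}(N, t)$ sits inside the linear system $|\I_\Sigma(7)|$, where $\Sigma = N^{(2)} \cup t$ is the length-$32$ subscheme encoding the ten double-point conditions and the tangency condition. By the Alexander--Hirschowitz theorem, for general $N \in \Sym^{10}(\pp^2)$ the ten double points impose independent conditions on plane septics (the case $(d, s) = (7, 10)$ is non-exceptional), so $h^0(\I_{N^{(2)}}(7)) = 36 - 30 = 6$. The two further conditions imposed by a general tangent vector $t$ at a general point $q \notin N$ are independent from the preceding ones, giving $h^0(\I_\Sigma(7)) = 4$ and hence a linear system of projective dimension $3$.

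The main obstacle, and the most delicate step, is verifying that a general member of $|\I_\Sigma(7)|$ actually lies in $U_{7,5}$: it must be irreducible with geometric genus exactly $5$, and its only singularities must be ordinary nodes at the ten points of $N$. Smoothness at $q$ is forced by the tangency condition, and smoothness away from the base locus of the pencil follows by Bertini applied to a suitable subsystem. At each point of $N$ the double-point condition is generically a genuine node, since imposing cuspidality (or worse) at $p_i \in N$ is a further codimension-$\geq 1$ condition on the linear system, and is therefore avoided by a generic member. Irreducibility for a generic member follows because any decomposition into curves of lower degree would force the nodes and tangency data into a proper subvariety of $\Sym^{10}(\pp^2) \times H$, contradicting generality of $(N, t)$. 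Once these checks are in place, $\pi^{-1}(N, t)$ is nonempty of dimension exactly $3$, and dominance of $\pi$ follows from the reduction above.
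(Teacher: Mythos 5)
Your overall strategy---a dimension count plus analysis of a single fiber---is genuinely different from the paper's, which instead shows that $\pi$ is \emph{smooth} at a general point of $S$ by identifying the obstruction with $H^1(N_f(-f^{-1}(N)-2p))$ for the normalization $f \colon C^\nu \to \pp^2$ and killing it via Riemann--Roch plus a general-position argument for the support of $t$. Your numerology is correct ($\dim S = 26$, target of dimension $23$, $h^0(\I_\Sigma(7)) = 4$ for general $\Sigma$ by Alexander--Hirschowitz), and the semicontinuity reduction is sound given irreducibility of the Severi variety.

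The gap is in the step you yourself flag as delicate, and it is not routine: you must produce a point of the target over which the fiber is nonempty \emph{and} contained in a $3$-dimensional variety, and each of the two ways of arranging this begs the question. (i) If you take $(N,t)$ general, so that Alexander--Hirschowitz applies and $|\I_\Sigma(7)| \cong \pp^3$, then nonemptiness of the fiber means exhibiting an irreducible septic of geometric genus $5$ with exactly ten nodes at ten \emph{prescribed general} points---which is precisely the dominance you are trying to prove. Your justifications (``cuspidality at $p_i$ is a further codimension $\geq 1$ condition, hence avoided''; ``a decomposition would force the data into a proper subvariety'') all presuppose that the relevant bad locus is a \emph{proper} closed subset of the $\pp^3$; for instance, to rule out a cusp at $p_i$ you must show that the image of $H^0(\I_\Sigma(7)) \to \m_{p_i}^2/\m_{p_i}^3$ is not contained in the cone of degenerate quadratic forms, which is an additional interpolation statement (one triple point, nine double points, and $t$) that you have not established. (ii) If instead you take $(N,t)$ to come from an actual curve $C_0 \in U_{7,5}$, nonemptiness is free, but the points of $N$ are no longer general, so Alexander--Hirschowitz does not give $h^0(\I_{\Sigma_0}(7)) = 4$; independence of the conditions imposed by the nodes of an honest nodal curve is exactly the classical smoothness of the Severi variety, i.e.\ the vanishing $h^1(N_f(-f^{-1}(N))) = 0$ that the paper computes. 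Either branch therefore requires a cohomological input your write-up omits. The paper's route supplies it directly: starting from an existing nodal curve, one computes $\deg N_f(-f^{-1}(N)) = 29 - 20 = 9 > 2g - 2$, so $h^1 = 0$, and a basepoint-free pencil argument handles the extra $-2p$, giving smoothness of $\pi$ and hence dominance in one stroke.
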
	
\begin{proof}
To show that the map is dominant, we will show that at a general point $(C, N, t)$ of $S$, the map $\pi$ is smooth.  Let $p$ be the support of $t$.  The obstruction to smoothness of $\pi$ at $(C, N, t)$ lies in $H^1(N_f(-f^{-1}(N) - 2p))$, where $N_f$ is the normal sheaf of the normalization map $f \colon C^{\nu} \to \pp^2$ and $f^{-1}(N)$ is the collection of $20$ points on $C^{\nu}$ lying above the nodes of $C$.  As $f$ is unramified, $N_f$ is a line bundle and we have an exact sequence
\[0 \to T_{C^{\nu}} \to f^* T_{\pp^2} \to N_f \to 0. \]
As such, $N_f \simeq K_{C^{\nu}} \otimes f^*(O_{\pp^2}(3))$ is a line bundle of degree $29$ on the genus 5 curve $C^\nu$.  Thus $N_f(-f^{-1}(N))$ has degree $9$ and so by Riemann-Roch
\[h^0(N_f(-f^{-1}(N))) = 5, \qquad h^1(N_f(-f^{-1}(N))) = 0. \]
For simplicity write $F = N_f(-f^{-1}(N))$.  Now we claim that if $p$ is a \emph{general point} of $C^\nu$, then the evaluation map $H^0(F) \to H^0(F|_{2p})$ is surjective.  Indeed, a pencil $[s : t]$ in $|F|$ defines a map to $\pp^1$, which in characteristic $0$ is generically unramified.  So the unique linear combination of $s$ and $t$ which vanishes at a generic point $p$ does not vanish to order $2$.  Using the long exact sequence associated to 
\[ 0 \to F(-2p) \to F \to F|_{2p} \to 0, \]
and the fact already observed that $h^1(F) = 0$, we obtain $h^1(F(-2p)) = h^1(N_f(-f^{-1}(N) - 2p)) = 0$.  Thus the map $\pi$ is smooth at a general point $(C, N, t)$, and hence dominant.
\end{proof}

\begin{lem}\label{quartic}
Let $X$ be a smooth degree $4$ curve in $\pp^2$, and let $D \subset X$ be a general effective divisor of degree $12$ supported at $11$ general points.  Then the map
\[ H^0(K_X) \otimes H^0(4K_X - D) \to H^0(5K_X - D), \]
is an isomorphism.
\end{lem}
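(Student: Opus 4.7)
My plan is to apply the basepoint-free pencil trick, taking the pencil $|4K_X - D|$ itself --- a natural choice, since $H^0(4K_X - D)$ is $2$-dimensional. Riemann-Roch on $X$ (genus $3$, $K_X = \O_X(1)$ of degree $4$) gives, for a general $D$ of the given type (in particular with $D \not\sim 3K_X$): $h^0(K_X) = 3$, $h^0(4K_X - D) = 2$, and $h^0(5K_X - D) = 6$. Thus both sides of the multiplication map have dimension $6$, so it suffices to prove injectivity.

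The first substantive step is to check that $|4K_X - D|$ is basepoint-free for a general $D$ in our $11$-parameter family of divisors. A basepoint at $p$ would make $4K_X - D - p$ a degree-$3$ line bundle with $h^0 = 2$, which by Riemann-Roch forces $4K_X - D \sim K_X + p - q$ for some $q \in X$, i.e., $D \sim 3K_X + q - p$. An Abel-Jacobi dimension count shows this ``bad locus'' in $\Sym^{12}(X)$ does not contain our family of divisors supported on $11$ general points, so a generic $D$ in the family avoids it.

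Given basepoint-freeness, pick a basis $s_1, s_2$ of $H^0(4K_X - D)$; the Koszul sequence
\[0 \to (4K_X - D)^{-1} \to \O_X^{\oplus 2} \xrightarrow{(s_1, s_2)} 4K_X - D \to 0,\]
after twisting by $K_X$, becomes
\[0 \to D - 3K_X \to K_X^{\oplus 2} \to 5K_X - D \to 0.\]
In the associated long exact sequence, the map $H^0(K_X)^{\oplus 2} \to H^0(5K_X - D)$ is identified, via $e_i \leftrightarrow s_i$, with the multiplication map $H^0(K_X) \otimes H^0(4K_X - D) \to H^0(5K_X - D)$ of the lemma. Its kernel is $H^0(D - 3K_X)$, which vanishes because $D - 3K_X$ is a nontrivial degree-$0$ line bundle for generic $D$ (the condition $D \sim 3K_X$ cuts out the $9$-dimensional linear system $|3K_X|$, which does not contain our $11$-dimensional family). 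Combined with the equality of dimensions, this injectivity yields the claimed isomorphism.

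The only non-formal step is verifying basepoint-freeness of $|4K_X - D|$ within our restricted family (rather than all of $\Sym^{12}(X)$); everything else is a direct consequence of the basepoint-free pencil trick and Riemann-Roch.
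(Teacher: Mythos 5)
Your proof is correct and follows essentially the same route as the paper: both apply the basepoint-free pencil trick to the pencil $|4K_X - D|$, using that $\O_X(D)$ is a general line bundle of degree $12$ so that the kernel vanishes and the two sides have equal dimension. The paper is merely terser (it asserts genericity of $\O_X(D)$ and basepoint-freeness outright, and writes the kernel as $H^0(3K_X - D)$ rather than your $H^0(D - 3K_X)$, which is the correct twist --- though both are nontrivial degree-zero bundles for general $D$, so nothing is affected).
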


\begin{proof}
The line bundle $\O_X(D)$ is a general line bundle of degree $12$ on $X$.  As such, $H^0(4K_X - D) = 2$ and for any point $p \in X$, $H^0(4K_X - D - p) = 1$ (e.g. it is basepoint-free).  By the basepoint-free pencil trick \cite[III.3]{acgh}, the kernel of the above tensor-product map is $H^0(3K_X - D) = 0$.
\end{proof}

\begin{proof}[Proof of Proposition \ref{tensoriso}]
We will show that \eqref{idealmap} is an isomorphism.  By Lemma \ref{irredu}, the subscheme $\Sigma$ can be choose to be a general point in $\Sym^{10}(\pp^2) \times H$.  Since the isomorphism in \eqref{idealmap} is an open condition on subschemes $\Sigma$, we can specialize these points to lie on a smooth quartic $X$.  As both sides of \eqref{idealmap} are dimension $9$, and the kernel of restriction $H^0(\I_{\Sigma}(5)) \to H^0(5K_X - D)$, which is generated by the equation of $X$, is clearly in the image of the tensor product map, it suffices to show the isomorphism when restricted to $X$.  The result now follows from Lemma \ref{quartic}.
\end{proof}

In the long exact sequence associated to \eqref{twist}, the map
\[H^1(\tilde{C}, K_{\tilde{C}}(p-\Gamma)) \to H^1(\tilde{C}, K_{\tilde{C}}^{\otimes 2}(-p-\Gamma)^{\oplus 3}) \]
is an isomorphism, and hence $H^1(\tilde{C}, \pi^*N_{C/\pp^3}(-\Gamma)) = 0$.  This completes the proof of interpolation for curves of degree $7$ and genus $5$.
%\end{proof}

\section{Curves of Degree 8 and Genus 6}

In this section, $C \hookrightarrow \pp^3$ will denote a curve of degree $8$ and genus $6$.  Let $\Gamma$ be a general set of $16$ points on $C$.  In this case $\deg N_C = 42$ and $\deg N_C(-\Gamma) = 10$.  We want to show that for a general Brill-Noether curve $C$, we have $h^1(N_C(-\Gamma)) = 0$.  Equivalently by Remark \ref{chi0}, we need to show that $h^0(N_C(-\Gamma)) = 0$.

As in Lemma \ref{rrcalc}, $\O_C(1) = K_C - p-q$ for two points $p, q \in C$.  Thus the curve $C \hookrightarrow \pp^3$ is the projection of the canonical curve $\tilde{C} \subset \pp^5$ from the secant line $\ell_{p,q} = \bar{p,q} \subset \pp^5$.  
A general canonical curve $\tilde{C}$ in $\pp^5$ is a quadric section of the unique del Pezzo surface $T$ of degree $5$ on which it lies.  As such $N_{\tilde{C}/T} = \O_{\tilde{C}}(2) = 2K_C$ is a line bundle of degree $20$.

The surface $T$ is the blowup of $\pp^2$ at $4$ points $p_1, p_2, p_3, p_4$ in linear general position, embedded by the anticanonical series.  Let $p = \bar{p_1, p_2} \cap \bar{p_3, p_4}$ in $\pp^2$.  By slight abuse of notation we will also write $p$ for the preimage in $T$.  Let $q$ be a generic point on $\bar{p_1, p_3}$.

Let $\hat{T} \colonequals \Bl_{p,q} T \xrightarrow{\beta} T$.  Denote by $E_p$ (resp. $E_q$) the exceptional divisor $\beta^{-1}(p)$ (resp. $\beta^{-1}(q)$).  Note that $\hat{T}$ is also the blowup of $\pp^2$ in the $6$ (non-generic) points $\{p_1, p_2, p_3, p_4, p, q\}$.  Let $L_{ij}$ be the proper transform in $\hat{T}$ of the line $\bar{p_i, p_j}$ joining $p_i$ and $p_j$ in $\pp^2$.  
%We'll also write $L_{pq}$ for the proper transform of the line joining $p$ and $q$ in $\pp^2$, and $L_{ip}$ (resp. $L_{iq}$) for the line joining $p_i$ and $p$ (resp. $p_i$ and $q$).
%We will take $p = L_{12}\cap L_{34}$ on $T$ and $q$ to be a generic point on $L_{13}$.  Let $I_{p,q}$ be the ideal of the subscheme $p \cup q \subset T$.

\begin{lem}\label{cubics}
The anticanonical linear system $|-K_{\hat{T}}| = \left|\beta^*(\O_T(1))(-E_p-E_q)\right|$ is basepoint-free.  Furthermore, for any zero dimensional, degree two subscheme $D \subset \hat{T}$ with $D \not\subset L_{12}$ and $D \not\subset L_{23}$ and $D \not\subset L_{13}$, let $\I_D$ denote the ideal sheaf of $D$.  Then we have
\[h^0(\hat{T}, -K_{\hat{T}}\otimes \I_D) = h^0(\hat{T}, -K_{\hat{T}}) - 2. \]
\end{lem}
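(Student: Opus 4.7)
The plan is to realize $\hat{T}$ as the blowup $\sigma\colon \hat{T}\to\pp^2$ at the six points $\{p_1,p_2,p_3,p_4,p,q\}$, under which $-K_{\hat{T}} = 3H - e_1-e_2-e_3-e_4-e_p-e_q$ and its global sections correspond to plane cubics through the six points. I would begin by computing $\chi(-K_{\hat{T}}) = 1+K_{\hat{T}}^2 = 4$ by Riemann--Roch and invoking Kawamata--Viehweg vanishing (applicable once $-K_{\hat{T}}$ is known to be big and nef) to deduce $h^0(-K_{\hat{T}})=4$. The crucial structural observation is that the three collinearities $\{p_1,p_2,p\}$, $\{p_3,p_4,p\}$, $\{p_1,p_3,q\}$ produce three pairwise disjoint $(-2)$-curves on $\hat{T}$ --- the proper transforms $L_{12}, L_{34}, L_{13}$, each satisfying $L^2=-2$ and $L\cdot(-K_{\hat{T}})=0$ --- so $\hat{T}$ is a weak del Pezzo surface of degree $3$.

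To prove basepoint-freeness I would invoke the standard fact that $|-K|$ is basepoint-free on every weak del Pezzo surface of degree at least $2$; alternatively, one can verify it directly using the three two-parameter families of reducible cubics $L_{12}+Q$, $L_{34}+Q'$, $L_{13}+Q''$ (with $Q$, $Q'$, $Q''$ conics through the appropriate remaining four points). The key point is that any two of $L_{12}, L_{34}, L_{13}$ meet only at one of the six blown-up points, so any prescribed additional point in $\pp^2$ avoids at least one of these three lines and can therefore be missed by a suitable reducible member; an analogous jet-space dimension count handles tangent directions along the exceptional divisors.

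For the main claim on conditions imposed by $D$, I would exploit that $-K_{\hat{T}}$ restricts to the trivial line bundle on each of the three $(-2)$-curves (the degree being zero on $L\cong\pp^1$), so every global section of $-K_{\hat{T}}$ is constant on each of $L_{12}, L_{34}, L_{13}$. Consequently, if $D$ lies inside one of these three curves, vanishing on $D$ is equivalent to vanishing identically on the whole $(-2)$-curve, a single condition that yields $h^0(-K_{\hat{T}}\otimes\I_D) = h^0(-K_{\hat{T}}-L) = 3$, confirming that the lemma's listed exceptions are genuinely necessary. In the complementary case I would use the anticanonical morphism $\phi\colon \hat{T}\to\pp^3$ supplied by basepoint-freeness: it contracts each $(-2)$-curve to an $A_1$ singularity on a nodal cubic surface and is an isomorphism on the complement, so $\phi|_D$ is a closed embedding. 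Surjectivity of $H^0(-K_{\hat{T}})\twoheadrightarrow H^0(-K_{\hat{T}}|_D)$ then follows since hyperplanes in $\pp^3$ separate length-two subschemes.

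The main obstacle is establishing basepoint-freeness cleanly in this non-generic configuration and confirming that no $(-2)$-curve other than $L_{12}, L_{34}, L_{13}$ lies on $\hat{T}$ --- otherwise the anticanonical map might contract an unexpected curve and introduce additional failure loci for $D$. Both points amount to a finite enumeration of negative-self-intersection classes in the rank-$7$ Picard lattice of $\hat{T}$ and are settled by the classification of weak del Pezzo surfaces of degree three.
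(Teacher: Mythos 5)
The paper offers no proof of this lemma---it simply cites p.~643 of Griffiths--Harris and Th\'eor\`eme~1 of Demazure [IV.3]---so your write-up supplies exactly the content those references encapsulate, and it does so along the standard lines: $\hat{T}$ is a weak del Pezzo surface of degree $3$ whose only $(-2)$-curves are $L_{12}$, $L_{34}$, $L_{13}$, and the anticanonical morphism realizes it as the minimal resolution of a $3$-nodal cubic surface in $\pp^3$, reducing the separation statement to the fact that hyperplanes separate length-two subschemes of $\pp^3$. This is correct, and your verification that the excluded cases are genuine exceptions (via $-K_{\hat{T}}|_{L}\cong\O_L$) is a nice addition. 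Note that you have silently corrected what is evidently a typo in the statement: the third $(-2)$-curve is $L_{34}$ (proper transform of $\bar{p_3,p_4}$, which contains $p_3$, $p_4$, $p$), not $L_{23}$, which passes through only two of the six points, is a $(-1)$-curve on which $-K_{\hat{T}}$ has degree $1$, and on which any length-two $D$ does impose independent conditions. The rest of the paper consistently works with $L_{12}\cup L_{34}\cup L_{13}$, so your reading is the intended one.

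Two points need tightening. First, the inference ``$\phi$ is an isomorphism on the complement, so $\phi|_D$ is a closed embedding'' literally covers only $D$ disjoint from the three $(-2)$-curves. When $D$ meets one of them without being contained in it you need two further observations: for reduced $D$ with one point on a contracted curve, $\phi$ still separates the two points because the three curves map to three distinct nodes and $\phi$ is injective off their union; and for non-reduced $D$ supported at $x\in L$ with tangent direction transverse to $L$, the local model of the contraction of a $(-2)$-curve, $(s,t)\mapsto(t,st,s^2t)$ onto the cone $\{b^2=ac\}$, shows that $\ker d\phi_x=T_xL$ exactly, so the transverse jet survives and $\phi|_D$ is still a closed embedding into the tangent space of the node. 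Second, your closing worry about unexpected $(-2)$-curves is discharged more quickly than by appealing to the classification: an effective $(-2)$-class on a blowup of $\pp^2$ at six distinct points must be a line through three of them or a conic through all six, and with $p_1,\dots,p_4$ general, $p=\bar{p_1,p_2}\cap\bar{p_3,p_4}$, and $q$ generic on $\bar{p_1,p_3}$, the only collinear triples are the three listed and no conic (necessarily degenerate, hence $\bar{p_1,p_2}+\bar{p_3,p_4}$) contains $q$. With these additions your argument is complete.
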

\begin{proof}
This well-known result is claimed without proof on page 643 in \cite{gh}.  It also follows from Th\'eor\`eme 1 in \cite[IV.3]{demazure}.
\end{proof}

We can arrange for a canonical curve $\tilde{C} \subset T$ to pass through the points $p$ and $q$, as it is a quadric section of $T$.  Let $\ell_{p,q} \subset \pp^5$ be the line in $\pp^5$ joining $p$ and $q$ (not to be confused with the proper transform of the line joining $p$ and $q$ in $\pp^2$!)  With such a choice, the line $\ell_{p,q}$ meets $T$ only at $p$ and $q$;  indeed, the line $\ell_{p,q}$ is the intersection of all hyperplanes containing $p$ and $q$.  Since $\beta^*(\O_T(1))(-E_p-E_q)$ is basepoint-free, all hyperplanes containing $p$ and $q$ cannot meet $T$ in another point.

Let $\pi \colon T \dashedrightarrow \pp^3$ be projection from $\ell_{p,q}$.  This is resolved by blowing up $p$ and $q$; we will write $\hat{T} = \Bl_{p,q} T \xrightarrow{\beta} T$.  Then write $\hat{\pi} \colon \hat{T} \to \pp^3$ for the resolved projection map.
%\begin{center}
%\begin{tikzcd}
%& \hat{T} = \Bl_{p,q} T \arrow[d l, "\beta"] \arrow[d r, "\hat{\pi}"] &\\
%T \arrow[r r, dashed,"\pi"] & & \pp^3
%\end{tikzcd}
%\end{center}
Let $\hat{C}$ be the proper transform of $\tilde{C}$ in $\hat{T}= \Bl_{p,q}T$.  The class of $C$ on $\hat{T}$ is thus $\beta^*(\O_T(2))(- E_p - E_q) = -2K_{\hat{T}}(E_p+E_q)$.  Because we will need it latter, we will show now:

\begin{lem}\label{bpf}
The class $[\hat{C}] = -2K_{\hat{T}}(E_p+E_q)$ on $\hat{T}$ is basepoint-free.
\end{lem}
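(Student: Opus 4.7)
The plan is to exhibit $-2K_{\hat{T}} + E_p + E_q$ as a sum of two basepoint-free classes and then invoke the general fact that the sum of basepoint-free linear systems is basepoint-free. Using $K_{\hat{T}} = \beta^*K_T + E_p + E_q$ (since $\beta$ is the blowup of two smooth points), I get $\beta^*(-K_T) = -K_{\hat{T}} + E_p + E_q$, and therefore
\[
[\hat{C}] \;=\; -2K_{\hat{T}} + E_p + E_q \;=\; -K_{\hat{T}} \;+\; \beta^*(-K_T).
\]

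Next I would verify that each summand on the right is basepoint-free. The class $|-K_{\hat{T}}|$ is basepoint-free directly by Lemma \ref{cubics}. For the second summand, $-K_T$ is the very ample anticanonical class giving the embedding $T \hookrightarrow \pp^5$ of the degree $5$ del Pezzo, so $|-K_T|$ is basepoint-free on $T$; pulling back under $\beta$ preserves this, since for any $x \in \hat{T}$ a section of $-K_T$ not vanishing at $\beta(x)$ pulls back to a section of $\beta^*(-K_T)$ not vanishing at $x$.

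To finish, I would cite (or quickly verify) the general fact: if $|A|$ and $|B|$ are basepoint-free, so is $|A + B|$, because for any point $x$ one can choose $s \in H^0(A)$ and $t \in H^0(B)$ both nonzero at $x$, so that $s \otimes t \in H^0(A+B)$ is nonzero at $x$. Applied to $A = -K_{\hat{T}}$ and $B = \beta^*(-K_T)$, this yields the lemma. The only substantive input is Lemma \ref{cubics}, which has already been established; once the decomposition is spotted there is no real obstacle, so I would expect the hardest part to be simply recognizing the correct splitting of the class.
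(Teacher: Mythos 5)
Your proof is correct, but it uses a genuinely different decomposition from the paper's. The paper writes $[\hat{C}] = (-K_{\hat{T}}+E_p) + (-K_{\hat{T}}+E_q)$ and argues that the first summand can have basepoints only along $E_p$ and the second only along $E_q$ (each contains the basepoint-free system $|-K_{\hat{T}}|$ plus a fixed exceptional divisor), then concludes using that $E_p$ and $E_q$ are disjoint. You instead write $[\hat{C}] = -K_{\hat{T}} + \beta^*(-K_T)$ via the blowup formula $K_{\hat{T}} = \beta^*K_T + E_p + E_q$, and both of your summands are honestly basepoint-free: the first by Lemma \ref{cubics}, the second as the pullback of the very ample anticanonical system embedding the quintic del Pezzo $T$ in $\pp^5$. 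This makes the product-of-sections argument apply verbatim at every point of $\hat{T}$; the paper's version, by contrast, only bounds the base locus of the sum inside $E_p \cup E_q$ and needs an extra word to rule out basepoints there, since at a point of $E_p$ one of its two summands may a priori still have a basepoint. Your route costs one extra line of setup (the canonical class of the blowup) but is the tighter of the two arguments, and both ultimately rest on the same input, Lemma \ref{cubics}.
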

\begin{proof}
As $-K_{\hat{T}}$ is basepoint free, $-K_{\hat{T}} + E_p$ could only have basepoints along $E_p$.  Similarly the only basepoints of $-K_{\hat{T}} +E_q$ could occur along $E_q$.  Hence the sum is basepoint free.
\end{proof}

Note that we have
\[ [\hat{C}] \cdot L_{12} = [\hat{C}] \cdot L_{34} = [\hat{C}] \cdot L_{13} = 1.\]
With this special choice of $p$ and $q$, Lemma \ref{cubics} guarantees that the map $\hat{\pi}$ still gives an embedding of $\hat{C}$ into $\pp^3$, since $\hat{C}$ meets each line of $L_{12} \cup L_{13} \cup L_{34}$ only once. 

As indicated in the introduction, the idea here is to exhibit a sub line bundle $L$ of $N_{C/\pp^3}$ of such a degree that it forces $h^0(N_{C}(-\Gamma)) = 0$.  Indeed, by Riemann-Roch, a general line bundle of degree $5$ on a genus $6$ curve has no global sections.  If $L \hookrightarrow N_C$ has degree $21$, the quotient $Q$ has degree $21$ as well.  After twisting down by $16$ general points, $L(-\Gamma)$ and $Q(-\Gamma)$ will both be general bundles of degree $5$, and hence force $h^0(N_C(-\Gamma) ) = 0$.

One way of producing subbundles of $N_{C/\pp^3}$ is to exhibit your curve on certain surfaces $S$: then $N_{C/S} \hookrightarrow N_{C/\pp^3}$.  A general curve of degree $8$ and genus $6$ lies on no planes or quadrics, but it does lie on a unique cubic surface.  Unfortunately, the degree of the normal bundle in a smooth cubic is $18$.  Our idea here is to use a \emph{singular} cubic surface, which will be the image of $\hat{T}$ under the map $\hat{\pi}$: a cubic surface with three ordinary double points.  The key lemma, which allows us to relate this to the normal bundle in the desingularization, is the following:

\begin{lem}\label{subdown}
Let $C \subset X$ be an embedding of a smooth curve in a surface and $\pi \colon X \to Y$ a generically unramified map of smooth varieties, whose composition with $C \hookrightarrow X$ gives an embedding $C \hookrightarrow Y$.  Let $E \subset X$ be the subscheme where $d\pi \colon T_X \to \pi^* T_Y$ drops rank.  Then
\[ N_{C/X}(E \cap C) \hookrightarrow \pi^*N_{C/Y}, \]
is an injection of vector bundles.
\end{lem}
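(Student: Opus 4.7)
The plan is to construct a morphism $\phi\colon N_{C/X} \to \pi^*N_{C/Y}$ from the functoriality of normal bundle sequences under $d\pi$, and then identify its zero locus on $C$ (scheme-theoretically) with $E \cap C$.

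First, I would assemble $\phi$. Restricting $d\pi\colon T_X \to \pi^*T_Y$ to $C$ and pairing with the normal bundle sequences of $C \subset X$ and of the embedding $C \hookrightarrow Y$ yields a ladder of short exact sequences in which the leftmost vertical map $T_C \to T_C$ is the identity (since the embedding $C \hookrightarrow Y$ is literally $\pi|_C$), the middle map is $d\pi|_C$, and the induced rightmost map is the desired $\phi$.

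Second, I would compute $\phi$ in local coordinates. Set $n = \dim Y$. Pick $(x,y)$ on $X$ near a point $q \in C$ with $C = \{y=0\}$, and coordinates $(u, v_1, \ldots, v_{n-1})$ on $Y$ such that $\pi|_C$ takes the form $x \mapsto (x, 0, \ldots, 0)$. Then
\[ \pi(x,y) = \bigl(\,x + y\,a(x,y),\ y\,b_1(x,y),\ \ldots,\ y\,b_{n-1}(x,y)\,\bigr). \]
A short computation of the $2\times 2$ minors of the Jacobian, reduced modulo $y$, shows that the ideal defining $E \cap C$ inside $\O_C$ is exactly $(b_1(x,0), \ldots, b_{n-1}(x,0))$. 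On the other hand, the local generator $\partial_y$ of $N_{C/X}$ maps under $d\pi$ to $a(x,0)\,\partial_u + \sum_i b_i(x,0)\,\partial_{v_i}$, and after modding out by the subbundle $T_C = \langle \partial_u \rangle$ we get
\[ \phi(\partial_y) \;=\; \sum_{i=1}^{n-1} b_i(x,0)\,\partial_{v_i} \;\in\; \pi^*N_{C/Y}. \]
Hence $\phi$ vanishes (as a section of $N_{C/X}^\vee \otimes \pi^*N_{C/Y}$) along the same ideal that cuts out $E \cap C$.

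Third, because $C$ is a smooth curve, every zero-dimensional subscheme is Cartier, so this ideal is locally principal, generated by some $t \in \O_C$ whose divisor is $E \cap C$. Dividing through by $t$, the coefficients $b_i(x,0)/t$ generate the unit ideal, so the twisted map
\[ N_{C/X}(E \cap C) \longrightarrow \pi^*N_{C/Y},\qquad t^{-1}\partial_y \longmapsto \sum_{i=1}^{n-1} (b_i(x,0)/t)\,\partial_{v_i}, \]
is nowhere zero, i.e., a subbundle inclusion.

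The main technical obstacle is the scheme-theoretic matching in the second step: one must verify that $\phi$ vanishes with exactly the multiplicity of $E \cap C$ (not just set-theoretically), so that twisting by $E \cap C$ gives the sharp divisor. The local coordinate description handles this directly, as both the vanishing scheme of $\phi$ and the defining ideal of $E \cap C$ on $C$ are cut out by the same tuple $(b_1(x,0), \ldots, b_{n-1}(x,0))$.
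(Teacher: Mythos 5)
Your proof is correct and follows essentially the same route as the paper's: both reduce the lemma to showing that the induced map $N_{C/X}\to\pi^*N_{C/Y}$ vanishes scheme-theoretically exactly along $E\cap C$, which rests on the fact that $d\pi$ is injective on $T_C$ since $\pi|_C$ is an immersion. The paper phrases this step geometrically (at a point of $E\cap C$ the kernel of $d\pi$ must be complementary to $T_{C,p}$, so the drop in rank is entirely seen by the normal direction), while your Jacobian-minor computation verifies the same order-of-vanishing claim explicitly in local coordinates.
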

\begin{proof}
The map $d\pi \colon T_X|_C/T_C \to T_Y|_C/T_C $ is an injection of vector bundles away from $E\cap C$.  At points $p$ of $E \cap C$, the differential drops rank, but as $\pi$ restricts to an immersion along $C$, the differential must vanish along a subspace of $T_{X, p}$ complementary to $T_{C,p}$.  Hence it vanishes on the fiber $T_{X,p}/T_{C,p}$ of $N_{C/X}$ at $p$ exactly to the order of $p$ in $E \cap C$.  All together, $N_{C/X} \hookrightarrow \pi^*N_{C/Y}(-E)$ as desired.
\end{proof}

We will apply this lemma both to the map $\beta \colon \Bl_{p,q}T \to T$ and $\hat{\pi} \colon \Bl_{p,q}T \to \pp^3$.  
%Beginning with the birational map $\beta \colon \hat{T} \to T$ of smooth surfaces, we have that 
The scheme on which $d \beta$ drops rank is the effective divisor $\Ram(\beta) = E_p + E_q \in H^0(K_{\hat{T}} - \beta^{*}K_{T})$ on which $\det(d\beta)$ vanishes \cite[1.41]{debarre}.
%  In this case, it is well known that the ramification divisor is scheme-theoretically the exceptional divisor of the blowup, namely $\beta^{-1}(p) \cup \beta^{-1}(q)$.  
Hence $N_{C/\hat{T}}(p+q) \hookrightarrow \beta^*N_{C/T}$ is an \emph{inclusion of line bundles} and thus an isomorphism.  As such, $N_{C/\hat{T}} \simeq \beta^*N_{C/T}(-p-q)$ is a line bundle of degree $20 - 2 = 18$.

By Lemma \ref{cubics}, the map $\hat{\pi} \colon \hat{T} \to \pp^3$ is an embedding away from the lines $ L_{12} \cup L_{34} \cup L_{13}$, and hence the scheme $E\subset \hat{T}$ where $d\hat{\pi}$ drops rank is supported on $ L_{12} \cup L_{34} \cup L_{13}$.  We claim that it is scheme-theoretically equal to this locus.  Indeed, the map $\hat{\pi}$ is just the complete linear system of cubics in $\pp^2$ through the six points $p_1, p_2, p_3, p_4, p,q$.  The curve $L_{12}$ is contracted by $\hat{\pi}$ because it is only a codimension $1$ condition for such a cubic to contain $L_{12}$: it simply must contain one point on the line.  If a higher multiple of the line $ L_{12}$ is contracted by $\hat{\pi}$, then the codimension of the space cubics containing the double line $L_{12}$ is less than or equal to $2$.  But there are no such cubics containing $2L_{12}$ \emph{and} passing through $p_1, p_2, p_3, p_4, p,q$.  Similarly for $L_{34}$ and $L_{13}$.  Thus we conclude that $E = L_{12} \cup L_{34} \cup L_{13}$ away from codimension $2$.  We can ignore codimension $2$ phenonmena, as the class of $[C] = -2K_{\hat{T}} + E_p +E_q$ is basepoint-free by Lemma \ref{bpf}.

Putting this all together, 
\[N_{C/\hat{T}}(E \cap C) \hookrightarrow \hat{\pi}^*N_{C/\pp^3}, \]
is an inclusion of vector bundles.  Thus $N_{C/\pp^3}$ has a subbundle of rank $1$ and degree $21$, as desired.  Using the above observation, for such a curve $C$, $h^1(N_C(-\Gamma)) = 0$, which completes the proof of interpolation.

\bibliographystyle{plain}
\bibliography{Interpolation}

\end{document}